\documentclass[11pt,reqno]{amsart}

\usepackage[a4paper,margin=3cm]{geometry}
\usepackage{amsmath,amssymb,amsthm,mathtools}
\usepackage{microtype}
\usepackage{enumitem}
\usepackage{xcolor}
\usepackage{tikz}
\usetikzlibrary{arrows.meta,positioning,calc}
\usepackage[
  colorlinks=true,
  linkcolor=blue,
  citecolor=blue,
  urlcolor=blue
]{hyperref}
\usepackage[capitalise,noabbrev]{cleveref}

\hypersetup{
  pdftitle={Factorial Residues Modulo a Prime: Beyond the Square-Root Bound},
  pdfauthor={Xiyu Hu},
  pdfsubject={A lower bound for the value set of factorials modulo a prime},
  pdfkeywords={factorial residues, value sets, finite fields, incidence geometry}
}

\numberwithin{equation}{section}
\allowdisplaybreaks
\setlist{itemsep=0pt,topsep=0.35em,parsep=0pt,partopsep=0pt}

\newtheorem{theorem}{Theorem}[section]
\newtheorem{lemma}[theorem]{Lemma}

\theoremstyle{remark}
\newtheorem{remark}[theorem]{Remark}

\newcommand{\F}{\mathbb{F}}
\newcommand{\cL}{\mathcal{L}}
\newcommand{\cN}{\mathcal{N}}
\newcommand{\cP}{\mathcal{P}}
\newcommand{\I}{\mathcal{I}}

\title[Factorial residues beyond the square-root bound]
{Factorial Residues Modulo a Prime:\\Beyond the Square-Root Bound}
\author{Xiyu Hu}
\date{July 2026}

\subjclass[2020]{11B50, 11T23, 52C10}
\keywords{factorial residues, value sets, finite fields, incidence geometry}

\begin{document}

\begin{abstract}
For a prime \(p\), let
\[
  A_p=\{k!\pmod p:1\leq k<p\}.
\]
We prove
\[
  |A_p|\gg p^{8/15}.
\]
This improves the general lower bound
\((\sqrt{2}-o(1))p^{1/2}\).
The proof starts from the identity
\[
  (n+2)!=(n+1)!+\frac{((n+1)!)^2}{n!}
\]
in \(\F_p\), which produces many incidences for a family of
fractional-linear maps.  After Cauchy--Schwarz, the transition maps between
two members of this family become affine lines, with multiplicity at most
two.  The Cartesian-product point-line incidence theorem of Stevens and
de Zeeuw then yields the exponent \(8/15\).
\end{abstract}

\maketitle

\section{Introduction}

Let \(p\) be a prime and write
\[
  A_p=\{k!\pmod p:1\leq k<p\}\subset\F_p^\times.
\]
Erd\H{o}s and Graham asked whether
\begin{equation}\label{eq:erdos-problem}
  |A_p|\sim (1-e^{-1})p
  \qquad (p\to\infty);
\end{equation}
see \cite[p.~96]{ErdosGraham1980}.  It is also listed as Erd\H{o}s
Problem~\#478 on Thomas F. Bloom's \emph{Erd\H{o}s Problems} website
\cite{Bloom478}.  The constant in \eqref{eq:erdos-problem} is the one
suggested by a random-mapping heuristic.  Even a positive-density lower
bound remains out of reach.

There is an elementary square-root lower bound.  Since \(0!=1=1!\) as
residue classes and
\[
  \frac{k!}{(k-1)!}=k\qquad(1\leq k<p),
\]
one has
\[
  A_p/A_p=\F_p^\times,
\]
and hence \(|A_p|\geq\sqrt{p-1}\).  Grebennikov, Sagdeev, Semchankau and
Vasilevskii strengthened this lower bound to
\[
  |A_p|\geq(\sqrt{2}-o(1))p^{1/2};
\]
see \cite{GrebennikovEtAl2024}.  Average questions concerning the
distribution of factorial residues were studied by Klurman and Munsch
\cite{KlurmanMunsch2017}; see also \cite{BanksEtAl2005} for earlier work on
the value set.

Our main result improves the exponent.

\begin{theorem}\label{thm:main}
For every prime \(p\),
\[
  |A_p|\gg p^{8/15},
\]
where the implied constant is absolute.
\end{theorem}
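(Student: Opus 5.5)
The plan is to turn the identity
\[
  (n+2)!=(n+1)!+\frac{((n+1)!)^2}{n!}
\]
into an incidence count and bound that count with the Stevens--de Zeeuw Cartesian-product point--line theorem. Write \(N=|A_p|\). For \(y\in\F_p^\times\) put \(f_y(x)=y+y^2/x\). The identity says \(f_{(n+1)!}(n!)=(n+2)!\) for \(0\le n\le p-3\). The triple \((n!,(n+1)!,(n+2)!)\) determines \(n\), because \((n+1)!/n!=n+1\). Hence the number \(E\) of triples \((x,y,z)\in A_p^3\) with \(z=f_y(x)\) satisfies \(E\ge p-2\). We harmlessly include \(0!=1\in A_p\).

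The next step linearises the maps. In the coordinate \(u=1/x\) the map \(f_y\) becomes the affine map \(u\mapsto y+y^2u\). Set \(B=A_p^{-1}=\{1/a:a\in A_p\}\), so \(|B|=N\). Define \(m(z)=\#\{(u,y)\in B\times A_p: y+y^2u=z\}\), so that \(E=\sum_{z\in A_p}m(z)\). Cauchy--Schwarz gives
\[
  p^2\ll E^2\le N\sum_z m(z)^2 .
\]
The sum \(\sum_z m(z)^2\) counts quadruples \((u,y,u',y')\) with \(y+y^2u=y'+y'^2u'\). Each such quadruple places the point \((u,u')\in B\times B\) on the line
\[
  \ell_{y,y'}:\ u'=\frac{y^2}{y'^2}\,u+\frac{y-y'}{y'^2}.
\]
The multiplicity of this line family is at most two. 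The slope \(s=(y/y')^2\) determines \(y/y'\) up to sign, and the intercept then determines \(y'\), provided \(y\neq y'\); if the intercept computation degenerates, only the identity line arises. The pairs with \(y=y'\) all give the diagonal line \(u'=u\) and contribute only \(N\cdot N=N^2\). No line is vertical, and all slopes are nonzero.

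So \(\sum_z m(z)^2\le N^2+2I(B\times B,\cL)\), where \(\cL\) is a set of at most \(N^2\) distinct non-axis-parallel lines. I would now invoke the Stevens--de Zeeuw theorem: for \(P=X\times Y\) with \(|X|\le|Y|\) and a set \(\cL\) of lines satisfying \(|X||Y|^2\le|\cL|^3\) and \(|X||\cL|\ll p^2\),
\[
  I(P,\cL)\ll |X|^{3/4}|Y|^{1/2}|\cL|^{3/4}+|P|+|\cL|.
\]
Here \(|X|=|Y|=N\) and \(|\cL|\le N^2\); if \(\cL\) is small we may pad it, or use monotonicity. The first condition \(N^3\le N^6\) holds. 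The second needs \(N^3\ll p^2\). If that fails, then \(N\gg p^{2/3}\ge p^{8/15}\) and we are done. Otherwise \(I\ll N^{5/4}\cdot N^{3/2}+N^2\ll N^{11/4}\), and therefore
\[
  p^2\ll N\cdot N^{11/4}=N^{15/4},
\]
that is, \(N\gg p^{8/15}\).

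The step I expect to need the most care is the multiplicity analysis of the line family \(\ell_{y,y'}\), together with matching the exact hypotheses of the Stevens--de Zeeuw theorem. The multiplicity analysis must show that each distinct line comes from at most two pairs \((y,y')\), with the identity line the only exception, so that \(\sum m(z)^2\) is genuinely bounded by incidences with distinct lines. For the hypotheses, I have to check the orientation condition and the range restriction relative to \(p\), and make sure the small-\(N\) and large-\(N\) regimes are both covered.
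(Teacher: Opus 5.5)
Your proposal is correct and is essentially the paper's proof: the same transition count \(\ge p-2\), Cauchy--Schwarz, affine linearisation with line multiplicity at most two off the diagonal, Stevens--de Zeeuw with \(N^2\) padded lines, and the split according to whether \(N^3\ll p^2\). The only difference is that you apply Cauchy--Schwarz over the common output \(z\). Your points are therefore \((1/x,1/x')\in A_p^{-1}\times A_p^{-1}\) on the lines \(u'=(y/y')^2u+(y-y')/y'^2\), whereas the paper sums over the common input \(x\) and uses points \((T_a(x),T_b(x))\in A_p\times A_p\) on the lines \(T_b\circ T_a^{-1}\); the bookkeeping and the exponent \(8/15\) are identical.
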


A formally related value-set problem concerns the self-power map
\[
  S_p=\{x^x\pmod p:1\leq x<p\}.
\]
Crocker proved the square-root-scale estimate \cite{Crocker1969}:
\[
  |S_p|\geq \left\lfloor\sqrt{\frac{p-1}{2}}\right\rfloor.
\]
Subsequent work has obtained substantially finer information about individual
fibres.  An important methodological precursor for the large-order regime is
the work of Bourgain and Shparlinski \cite{BourgainShparlinski2008} on
exponential sums with consecutive modular roots.  Writing
\(N_p(a)=\#\{1\leq x<p:x^x\equiv a\pmod p\}\), Balog, Broughan and
Shparlinski \cite{BalogBroughanShparlinski2011} note that their large-order
argument is similar to this approach; they proved the uniform bound
\(N_p(a)\leq p^{12/13+o(1)}\) and a nontrivial estimate for the collision
sum \(\sum_aN_p(a)^2\).  Cilleruelo and Garaev
\cite{CillerueloGaraev2016} strengthened several fibre estimates, including
\(N_p(1)\leq p^{27/82+o(1)}\), and obtained further bounds after sorting
\(a\) by its multiplicative order.

The proofs make essential use of the cyclic structure of
\(\F_p^\times\), of order \(p-1\).  After grouping by multiplicative order
and taking discrete logarithms, they split the solutions according to
divisors of \(p-1\).  For small orders, interval structure controls additive
growth while subgroups constrain multiplicative growth, enabling a
sum--product estimate.  For large orders, M\"obius inversion controls the
lifts of reduced residue classes between moduli dividing \(p-1\), and sparse
polynomial estimates supply the saving.  Cilleruelo and Garaev similarly use
intersections of intervals with multiplicative subgroups and exponential
sums over those subgroups.  The organisation is elementary in spirit, but
the exponent savings depend on deeper estimates.

These results give strong control of single fibres and collision moments, but
they do not yield \(|S_p|\geq p^{1/2+\delta}\) for any fixed \(\delta>0\).
Thus, before the present theorem, the general value-set bounds for both
\(S_p\) and \(A_p\) remained at the square-root scale.  The new input here is
specific to consecutive factorials: it turns a second moment into incidences
with a low-multiplicity family of affine lines.  We do not know an analogous
incidence-geometric linearisation for the self-power map.

The argument is based on three consecutive factorials.  If
\[
  x=n!,\qquad a=(n+1)!,\qquad z=(n+2)!,
\]
then
\[
  z=a+\frac{a^2}{x}.
\]
For \(a\in\F_p^\times\), define
\[
  T_a(x)=a+\frac{a^2}{x}.
\]
The factorial sequence supplies \(\gg p\) pairs \((x,a)\in A_p^2\) for
which \(T_a(x)\in A_p\).  The key algebraic observation is that
\[
  T_b\circ T_a^{-1}
\]
is affine.  Consequently, after Cauchy--Schwarz, the relevant second moment
is bounded by incidences between \(A_p\times A_p\) and a set of affine
lines.  We first recall the incidence estimate and then carry out this
linearisation.

Throughout the paper, all implied constants are absolute.

\begin{figure}[t]
\centering
\begin{tikzpicture}[
  x=1cm,
  y=1cm,
  font=\scriptsize,
  stage/.style={
    draw=black!68,
    line width=0.48pt,
    rounded corners=2.5pt,
    fill=black!1.5,
    align=center,
    inner xsep=4pt,
    inner ysep=3.2pt
  },
  merge/.style={
    draw=black!82,
    line width=0.62pt,
    rounded corners=2.5pt,
    fill=black!3,
    align=center,
    inner xsep=4pt,
    inner ysep=3.2pt
  },
  flow/.style={
    -{Latex[length=1.8mm,width=1.15mm]},
    line width=0.52pt,
    draw=black!78,
    shorten <=1.2pt,
    shorten >=1.2pt
  },
  bluefield/.style={draw=blue!62!black},
  redfield/.style={draw=red!68!black}
]
  \definecolor{fieldblue}{RGB}{38,86,143}
  \definecolor{fieldred}{RGB}{157,61,48}

  \node[font=\small\bfseries] at (3.25,7.65)
    {The algebraic-to-geometric passage};
  \node[font=\small\bfseries] at (10.9,7.65)
    {An exact finite-field snapshot (\(p=13\))};
  \draw[densely dashed,black!30] (7.18,0.1) -- (7.18,7.3);

  \node[stage,text width=5.8cm] (factorials) at (3.25,6.55)
    {\textsc{\scriptsize Factorial transitions}\\[-1pt]
     \(\displaystyle T_a(x)=a+\frac{a^2}{x}\)\\[-1pt]
     \(\displaystyle
       x=n!,\quad a=(n+1)!,\quad T_a(x)=(n+2)!\in A\)\\[-1pt]
     \(\displaystyle \mathcal N(A)\geq p-2\)};

  \node[
    stage,
    text width=5.05cm,
    below=4.8mm of factorials
  ] (pairing)
    {\textsc{\scriptsize Pair a common input}\\[-1pt]
     \(\displaystyle x,a,b\in A\)\\[-1pt]
     \(\displaystyle
       \bigl(u,v\bigr)=\bigl(T_a(x),T_b(x)\bigr)\in A^2\)};

  \node[
    stage,
    text width=2.4cm,
    minimum height=1.18cm,
    below=5.2mm of pairing,
    xshift=-1.78cm
  ] (point)
    {\textsc{\tiny Output point}\\[-1pt]
     \(\displaystyle P_{x;a,b}=(u,v)\)\\[-1pt]
     \(\displaystyle {}\in A\times A\)};

  \node[
    stage,
    text width=3.45cm,
    minimum height=1.18cm,
    below=5.2mm of pairing,
    xshift=1.78cm
  ] (line)
    {\textsc{\tiny Parameter line}\\[-1pt]
     \(\displaystyle r=\frac ba\)\\[-1pt]
     \(\displaystyle \ell_{a,b}:\ v=r^2u+ar(1-r)\)};

  \coordinate (branchmid) at ($(point.south)!0.5!(line.south)$);
  \node[
    merge,
    text width=5.4cm,
    below=5.2mm of branchmid
  ] (incidence)
    {\(\displaystyle P_{x;a,b}\in\ell_{a,b}\)\\[-1pt]
     because \(\displaystyle v=T_b\!\left(T_a^{-1}(u)\right)\)\\[-1pt]
     \(\displaystyle
       r^2=\operatorname{slope}(\ell)
       \ \Longrightarrow\ r\in\{\rho,-\rho\}\)\\[-1pt]
     \(\displaystyle
       |\mathcal L_A|\leq m^2,\qquad
       \operatorname{mult}(\ell)\leq2\ \text{off }v=u\)};

  \draw[flow] (factorials.south) -- (pairing.north);
  \draw[flow]
    ($(pairing.south west)!0.30!(pairing.south east)$)
    -- (point.north);
  \draw[flow]
    ($(pairing.south west)!0.70!(pairing.south east)$)
    -- (line.north);
  \draw[flow] (point.south)
    -- ($(incidence.north west)!0.30!(incidence.north east)$);
  \draw[flow] (line.south)
    -- ($(incidence.north west)!0.70!(incidence.north east)$);

  \node[anchor=west,font=\scriptsize] at (7.58,7.12)
    {\(\displaystyle
      A_{13}=\{1,2,3,5,6,7,9,11,12\}\subset\F_{13}^{\times}
      \quad(0\notin A_{13})\)};
  \node[anchor=west,font=\scriptsize] at (7.58,6.76)
    {\(\displaystyle
      \mathcal P_{13}=A_{13}\times A_{13}\subset\F_{13}^{2},
      \qquad 0\in\F_{13}\)};

  \begin{scope}[shift={(8.35,1.75)},x=0.325cm,y=0.325cm]
    \draw[step=1,black!8,line width=0.25pt] (0,0) grid (12,12);
    \draw[->,line width=0.55pt] (-0.28,0) -- (12.72,0)
      node[right=2pt] {\(u\)};
    \draw[->,line width=0.55pt] (0,-0.28) -- (0,12.72)
      node[above=2pt] {\(v\)};

    \foreach \t in {0,4,8,12} {
      \draw[black!55,line width=0.35pt] (\t,0.12) -- (\t,-0.12)
        node[below=1.3pt,font=\tiny] {\(\t\)};
      \draw[black!55,line width=0.35pt] (0.12,\t) -- (-0.12,\t)
        node[left=1.3pt,font=\tiny] {\(\t\)};
    }

    \foreach \u in {1,2,3,5,6,7,9,11,12} {
      \foreach \v in {1,2,3,5,6,7,9,11,12} {
        \fill[black!32] (\u,\v) circle (0.062);
      }
    }

    \foreach \q in {
      (0,7),(1,11),(2,2),(3,6),(4,10),(5,1),(6,5),
      (7,9),(8,0),(9,4),(10,8),(11,12),(12,3)
    } {
      \draw[fieldblue,line width=0.55pt,fill=white] \q circle (0.105);
    }
    \foreach \q in {(12,3),(1,11),(6,5),(11,12),(5,1),(7,9)} {
      \fill[fieldblue] \q circle (0.16);
      \draw[white,line width=0.28pt] \q circle (0.16);
    }

    \foreach \q in {
      (0,4),(1,3),(2,2),(3,1),(4,0),(5,12),(6,11),
      (7,10),(8,9),(9,8),(10,7),(11,6),(12,5)
    } {
      \draw[fieldred,line width=0.55pt,fill=white] \q circle (0.105);
    }
    \foreach \q in {(11,6),(3,1),(2,2),(12,5),(6,11)} {
      \fill[fieldred] \q circle (0.16);
      \draw[white,line width=0.28pt] \q circle (0.16);
    }
    \path[fill=fieldblue]
      (2,2) ++(0,0.16) arc (90:270:0.16) -- cycle;
    \path[fill=fieldred]
      (2,2) ++(0,-0.16) arc (-90:90:0.16) -- cycle;
    \draw[white,line width=0.28pt] (2,2) circle (0.16);

    \draw[black!70,line width=0.38pt] (0,0) rectangle (12,12);
    \draw[black!65,-{Latex[length=1.25mm,width=0.8mm]},line width=0.38pt]
      (6.15,4.75) -- (6,5);
    \node[
      anchor=south west,
      fill=white,
      fill opacity=0.9,
      text opacity=1,
      inner sep=1pt,
      font=\tiny
    ] at (6.18,4.78)
      {\(x=3:\ P_{3;3,6}=(6,5)\)};
  \end{scope}

  \node[
    anchor=west,
    text=fieldblue,
    font=\scriptsize,
    align=left,
    text width=6.0cm,
    inner sep=0pt
  ] at (8.15,1.27)
    {\(\circ\ \ell_B=\ell_{3,6}=\ell_{1,11}:\ v=4u+7\)};
  \node[
    anchor=west,
    text=fieldred,
    font=\scriptsize,
    align=left,
    text width=6.0cm,
    inner sep=0pt
  ] at (8.15,0.90)
    {\(\circ\ \ell_R=\ell_{5,12}=\ell_{12,5}:\ v=-u+4\)};

  \draw[flow] (incidence.east)
    .. controls (7.32,1.82) and (7.55,2.55) ..
    (8.18,3.21);

  \node[
    draw=black!82,
    line width=0.64pt,
    rounded corners=2.5pt,
    fill=black!1.5,
    align=center,
    text width=14.2cm,
    inner sep=5pt
  ] at (7.08,-0.72)
  {\(\displaystyle
    p^2\ll \mathcal N(A)^2
    \leq m\bigl(m^2+2\I(\mathcal P,\mathcal L_A)\bigr)
    \ll m\cdot m^{11/4}=m^{15/4}\)\\[3pt]
   \(\displaystyle
    \Longrightarrow\qquad
    |A_p|=m\gg p^{8/15}>p^{1/2}.
   \)};
\end{tikzpicture}
\caption{The structural passage behind the proof.  The left-hand diagram
shows how two evaluations with a common input produce a point and how
eliminating that input produces the line containing it.  All coordinates
and equations in the right-hand panel are over \(\F_{13}\).  The grey
dots form \(\mathcal P_{13}=A_{13}\times A_{13}\), while the blue and red
circular markers display the full \(13\)-point sets
\(\ell_B=\{(u,v):v=4u+7\}\) and
\(\ell_R=\{(u,v):v=-u+4\}\), respectively.  A filled circle marks an
incidence supplied by an admissible input \(x\in A_{13}\); the disc at
their common incidence \((2,2)\) is split between the two colours.  The
equalities below the plot record the two parameter pairs representing
each sample line.}
\label{fig:incidence-linearisation}
\end{figure}

\section{A Cartesian-product incidence bound}

For a set of points \(\cP\subseteq\F_p^2\) and a set of affine lines
\(\cL\), let
\[
  \I(\cP,\cL)
  =|\{(q,\ell)\in\cP\times\cL:q\in\ell\}|.
\]
We use the following theorem of Stevens and de Zeeuw
\cite[Theorem~4]{StevensDeZeeuw2017}.

\begin{theorem}[Stevens--de Zeeuw]\label{thm:sdz}
There are absolute constants \(c_0,C_0>0\) with the following property.
Let \(X,Y\subseteq\F_p\), with \(|X|=x\leq y=|Y|\), and let \(\cL\) be a
set of \(N\) distinct affine lines.  Suppose that
\[
  xy^2\leq N^3
  \qquad\text{and}\qquad
  xN\leq c_0p^2.
\]
Then
\[
  \I(X\times Y,\cL)
  \leq C_0\bigl(x^{3/4}y^{1/2}N^{3/4}+N\bigr).
\]
\end{theorem}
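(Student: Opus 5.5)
Since this is the cited result \cite[Theorem~4]{StevensDeZeeuw2017}, I would follow the route of Stevens and de Zeeuw. The plan is to reduce the Cartesian-product count to a point--plane incidence problem in \(\F_p^3\) and then apply Rudnev's point--plane incidence theorem. That theorem says the following. Let \(n\) points and \(n\) planes in \(\F_p^3\) be given, with \(n\ll p^2\), and let \(k\) be the maximum number of collinear points. Then they have \(\ll n^{3/2}+kn\) incidences.

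\textbf{Setting up the energy.} First discard vertical lines, or treat them trivially; they contribute at most \(N\) since each meets \(X\times Y\) in at most \(y\) points. Write each remaining line as \(\ell=\{(s,t):t=as+b\}\). Then
\[
\I(X\times Y,\cL)=\sum_{t\in Y}\#\{(s,\ell)\in X\times\cL:as+b=t\}.
\]
Cauchy--Schwarz over \(t\in Y\) gives \(\I^2\leq y\,E\), where
\[
E=\#\{(s,\ell,s',\ell')\in (X\times\cL)^2: as+b=a's'+b'\}.
\]

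\textbf{Passing to points and planes.} Rewrite the defining equation of \(E\) as \(s\,a - s'\,a' + b - b' = 0\). Take the points \(q=(s,a',b')\in\F_p^3\), with \(s\in X\) and \((a',b')\) the parameters of some \(\ell'\in\cL\). Take the planes \(\pi_{s',a,b}=\{(u,v,w): au - s'v - w + b=0\}\). Then \(E\) is the number of point--plane incidences. There are \(n=xN\) points and \(n=xN\) planes, and the planes are distinct. The hypothesis \(xN\leq c_0p^2\) is exactly the size condition in Rudnev's theorem. This yields
\[
E\ll (xN)^{3/2}+k\,xN .
\]
Hence \(\I\ll x^{3/4}y^{1/2}N^{3/4}+(y\,k\,xN)^{1/2}\).

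\textbf{Main obstacle: the collinearity term.} I expect this to be the hardest step. The point set \(X\times\{(a',b')\}\) is a union of \(N\) lines parallel to the first axis, each carrying \(x\) points. Other lines in \(\F_p^3\) meet each fibre \(\{s\}\times\{\text{line parameters}\}\) at most once, so they contain at most \(x\) points as well. Thus one expects \(k\leq\max(x,\text{(collinear line parameters)})\), and a line of the parameter plane corresponds to a pencil or parallel class of lines in \(\cL\).

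I would first try to show that the collinear contribution is dominated by the main term when \(k\ll x\). The term \((yx^2N)^{1/2}\) is \(\leq x^{3/4}y^{1/2}N^{3/4}\) iff \(x\leq N\), which follows from \(xy^2\leq N^3\) and \(x\leq y\). If some line in parameter space carries many members of \(\cL\), say a pencil through a common point, I would handle it separately. One option is to remove rich pencils, bounding their incidences directly by \(N+\) (pencil size)\(\cdot\)1 plus a trivial count. Alternatively, Rudnev's sharper form allows the \(kn\) term to be replaced by counting incidences along the rich lines, which are bounded by \(x\cdot(\text{pencil size})\le xN\). Finally, absorb the leftover term \(N\) coming from lines meeting \(X\times Y\) at most once. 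Combining these gives \(\I\leq C_0(x^{3/4}y^{1/2}N^{3/4}+N)\).
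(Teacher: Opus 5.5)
The paper gives no proof of this statement; it simply quotes Theorem~4 of Stevens and de Zeeuw. Your outline (Cauchy--Schwarz over $t\in Y$, the energy $E$, $xN$ points and $xN$ planes in $\F_p^3$, Rudnev's bound) is the right architecture. However, the step you flag as the main obstacle is left unresolved, and your second option cannot work, because it is the energy $E$ itself, not merely Rudnev's estimate for it, that can be too large.

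Suppose $\cL$ contains $K$ lines through a point $q=(q_1,q_2)$ with $q_1\in X$. Then the quadruples with $s=s'=q_1$ already give $E\ge K^2$. Geometrically, the line $\{q_1\}\times(\text{dual of the pencil})$ carries $K$ of your points and lies in $K$ of your planes, namely the $\pi_{q_1,\ell}$ with $\ell$ through $q$. So the incidences along rich lines number $K^2$, not $\le x\cdot K$. Take a full pencil with $x=y$, $N=x^4$ and $x\le p^{1/4}$; both hypotheses hold. Then $\I^2\le yE$ gives at best $\I\le y^{1/2}N=x^{9/2}$, whereas the target is $\ll x^{17/4}$. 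So $\cL$ must be pruned \emph{before} Cauchy--Schwarz.

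Aiming for ``$k\ll x$'' is also the wrong target. Pruning pencils down to size $x$ can cost $(N/x)\cdot xy=Ny$ incidences, which in general exceeds $x^{3/4}y^{1/2}N^{3/4}$. Separately, vertical lines contribute up to $xy$, not $N$; this is absorbed only because $xy^2\le N^3$.

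The missing ingredient is a case split plus pruning at the threshold $K=\sqrt{xN}$, which uses all the slack in Rudnev's bound. Note first that $x\le N$, since $x^3\le xy^2\le N^3$.

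\textbf{Case $xN\le y^2$.} Each non-vertical line has at most $x$ incidences, so $\I\le xN\le (xN)^{3/4}y^{1/2}$.

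\textbf{Case $xN>y^2$.} Greedily delete pencils and parallel classes of at least $K$ lines. There are at most $N/K$ deletions. A deleted family $F$ has at most $xy+|F|$ incidences, since every point other than the centre lies on at most one line of $F$. Hence the deleted lines contribute at most $y\sqrt{xN}+N\le y^{1/2}(xN)^{3/4}+N$, using $y^2<xN$. For the surviving lines, every line of $\F_p^3$ contains at most $\max(x,K)=K$ of your points. Rudnev then gives $E\ll (xN)^{3/2}+K\cdot xN\ll (xN)^{3/2}$, and hence $\I\ll x^{3/4}y^{1/2}N^{3/4}$.

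With this step inserted, your outline becomes a complete proof.
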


We shall apply \cref{thm:sdz} with \(X=Y=A_p\) and \(N=|A_p|^2\).

\section{Many factorial transitions}

Let \(A=A_p\) and set \(m=|A|\).  For \(a,x\in\F_p^\times\), define
\[
  T_a(x)=a+\frac{a^2}{x}.
\]
Consider the transition count
\[
  \cN(A)=|\{(x,a)\in A^2:T_a(x)\in A\}|.
\]

\begin{lemma}\label{lem:many-transitions}
For every odd prime \(p\),
\[
  \cN(A_p)\geq p-2.
\]
\end{lemma}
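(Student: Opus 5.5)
We exhibit, for each admissible index \(n\), one pair \((x,a)\in A^2\) with \(T_a(x)\in A\), and then check that distinct indices give distinct pairs. Take \(x=n!\), \(a=(n+1)!\) and \(z=(n+2)!\) in \(\F_p\). We need all three factorials to lie in \(A=\{k!:1\le k<p\}\), and \(x,a\) to be nonzero so that \(T_a(x)\) is defined. This holds for \(0\le n\le p-3\): then \(n+2\le p-1\), so none of the factorials vanishes modulo \(p\). For \(n\ge 1\), \(n!\in A\) directly, and for \(n=0\) we use \(0!=1=1!\in A\). Using \((n+1)!=(n+1)\,n!\), we get
\[
  T_a(x)=a+\frac{a^2}{x}=(n+1)!\Bigl(1+\frac{(n+1)!}{n!}\Bigr)=(n+1)!\,(n+2)=(n+2)!=z\in A .
\]
This gives \(p-2\) indices \(n\in\{0,1,\dots,p-3\}\), each producing an element \((x,a)=(n!,(n+1)!)\) of the set counted by \(\cN(A)\).

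The remaining step, and the only real point to check, is that \(n\mapsto(n!,(n+1)!)\) is injective on \(\{0,\dots,p-3\}\). The ratio \(a/x=(n+1)!/n!=n+1\in\F_p\) recovers \(n+1\) modulo \(p\). Since \(1\le n+1\le p-2<p\), this determines \(n\) uniquely. Distinct \(n\) therefore give distinct pairs, and \(\cN(A_p)\ge p-2\).

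There is no genuine obstacle here. The only care needed is at the boundary indices: \(n=0\), which relies on the convention \(0!=1\in A\), and \(n=p-3\), which guarantees that \((n+2)!=(p-1)!\) is still a nonzero element of \(A\) (it equals \(-1\) by Wilson). For \(p=3\) the range is just \(n=0\), giving the pair \((1,1)\) with \(T_1(1)=2=2!\in A\), which is consistent with the bound \(p-2=1\).
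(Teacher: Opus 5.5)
Your proposal is correct and follows the paper's proof essentially verbatim: you use the same triples \(x=n!\), \(a=(n+1)!\), \(z=(n+2)!\) for \(0\le n\le p-3\), the same convention \(0!=1=1!\in A_p\), and the same injectivity argument via \(a/x=n+1\). Your extra boundary checks (Wilson at \(n=p-3\), the case \(p=3\)) are harmless additions.
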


\begin{proof}
For each integer \(n\) with \(0\leq n\leq p-3\), put
\[
  x=n!,\qquad a=(n+1)!,\qquad z=(n+2)!
\]
in \(\F_p^\times\).  Although the definition of \(A_p\) starts with
\(1!\), the residue \(0!=1\) belongs to \(A_p\) because \(0!=1!\).
Hence \(x,a,z\in A_p\).  Moreover,
\[
  \frac{a}{x}=n+1,
\]
and therefore
\[
  T_a(x)=a+\frac{a^2}{x}
        =a+(n+1)a
        =(n+2)a
        =z\in A_p.
\]
The resulting ordered pairs \((x,a)\) are distinct: the pair determines
\(a/x=n+1\), and the residues \(1,2,\ldots,p-2\) are distinct modulo
\(p\).  Thus at least \(p-2\) pairs are counted.
\end{proof}

\section{Cauchy--Schwarz and affine linearisation}

For \(x\in A\), write
\[
  R(x)=|\{a\in A:T_a(x)\in A\}|.
\]
Then \(\cN(A)=\sum_{x\in A}R(x)\), so Cauchy--Schwarz gives
\begin{equation}\label{eq:cauchy}
  \cN(A)^2
  \leq m\sum_{x\in A}R(x)^2
  =m\sum_{a,b\in A}M(a,b),
\end{equation}
where
\[
  M(a,b)
  =|\{x\in A:T_a(x)\in A,\ T_b(x)\in A\}|.
\]

The map \(T_a\) is injective on \(\F_p^\times\), and its inverse on
\(\F_p\setminus\{a\}\) is
\[
  T_a^{-1}(u)=\frac{a^2}{u-a}.
\]
Consequently,
\begin{align*}
  T_b(T_a^{-1}(u))
  &=b+\frac{b^2}{a^2/(u-a)}\\
  &=\left(\frac ba\right)^2u+b-\frac{b^2}{a}.
\end{align*}
Define the affine line
\begin{equation}\label{eq:line}
  \ell_{a,b}:\quad
  v=\left(\frac ba\right)^2u+b-\frac{b^2}{a}.
\end{equation}
For each \(a,b\in A\), the substitution \(u=T_a(x)\) is injective in
\(x\), and hence
\begin{equation}\label{eq:M-incidence}
  M(a,b)
  \leq
  |\{u\in A:(u,\ell_{a,b}(u))\in A\times A\}|.
\end{equation}
The following elementary multiplicity bound is the other structural
input.

\begin{lemma}\label{lem:multiplicity}
Let \(p\) be odd and let \(A\subseteq\F_p^\times\).
\begin{enumerate}[label=\textup{(\roman*)},leftmargin=2.1em]
\item \(\ell_{a,b}\) is the identity line \(v=u\) if and only if
      \(a=b\).
\item Every nonidentity affine line has at most two representations of
      the form \(\ell_{a,b}\) with \((a,b)\in A^2\).
\end{enumerate}
\end{lemma}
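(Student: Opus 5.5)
The plan is to reparametrise the line \(\ell_{a,b}\) by \(r=b/a\in\F_p^\times\). Then \(b=ra\) and \(b-b^2/a=ra-r^2a=ar(1-r)\), so \eqref{eq:line} becomes
\[
  \ell_{a,b}:\quad v=r^2u+ar(1-r).
\]
Both parts of the lemma will be read off by comparing slopes and intercepts in this form.

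For (i), \(\ell_{a,b}\) is the line \(v=u\) exactly when \(r^2=1\) and \(ar(1-r)=0\). Since \(a,r\neq0\), the intercept condition forces \(r=1\), which is \(a=b\). Conversely, if \(a=b\) then \(r=1\), and the line is \(v=u\).

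For (ii), fix a nonidentity line \(v=su+t\) and count the pairs \((a,b)\in A^2\), with \(A\subseteq\F_p^\times\), such that \(\ell_{a,b}\) equals this line. The slope condition \(r^2=s\) has at most two solutions \(r\in\F_p^\times\), namely \(\pm\rho\) when \(s=\rho^2\), and these are distinct because \(p\) is odd. It then suffices to show that each admissible \(r\) gives at most one pair. Since \(b=ra\), the pair is determined by \(a\), and \(a\) must satisfy \(ar(1-r)=t\).

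\begin{itemize}
\item If \(r\neq1\), then \(r(1-r)\neq0\), so \(a=t/(r(1-r))\) is unique, and the root \(r\) contributes at most one representation.
\item If \(r=1\), the intercept is \(0\) and the line is \(v=u\), which is excluded. So for a nonidentity line the root \(r=1\) contributes nothing, even though it would give many pairs \((a,a)\).
\end{itemize}

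Summing over the at most two roots \(r\) gives at most two representations. The only point needing care is the \(r=1\) degeneracy, and part (i) isolates it exactly, so I do not expect any real obstacle beyond recording the parametrisation.
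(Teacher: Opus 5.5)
Your proposal is correct and follows the paper's proof essentially step for step: you reparametrise by \(r=b/a\) to get \(v=r^2u+ar(1-r)\), read off (i) from the slope and intercept, and for (ii) observe that each of the at most two square roots \(r\neq1\) of the slope determines \(a=t/(r(1-r))\) and then \(b=ar\). The only cosmetic difference is that in (i) you get \(r=1\) directly from \(ar(1-r)=0\) with \(a,r\neq0\), whereas the paper first uses that \(p\) is odd to exclude \(r=-1\); both arguments are valid.
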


\begin{proof}
Put \(r=b/a\).  Then \eqref{eq:line} can be written as
\begin{equation}\label{eq:line-ratio}
  \ell_{a,b}:\qquad v=r^2u+ar(1-r).
\end{equation}
Thus \(\ell_{a,b}\) is the identity line precisely when
\[
  r^2=1
  \qquad\text{and}\qquad
  ar(1-r)=0.
\]
Because \(a,r\neq0\) and \(p\) is odd, the second equality excludes
\(r=-1\); hence \(r=1\), which is equivalent to \(a=b\).  Conversely,
\(a=b\) gives \(r=1\) and therefore \(v=u\).

For the second assertion, fix a nonidentity line
\(\ell:v=\sigma u+\tau\).  Every representation
\(\ell=\ell_{a,b}\) determines a ratio \(r=b/a\) satisfying
\begin{equation}\label{eq:ratio-recovery}
  r^2=\sigma,
  \qquad
  ar(1-r)=\tau.
\end{equation}
The first equation has at most two solutions \(r\in\F_p\).  Moreover,
no such solution is \(r=1\), since that would represent the identity
line.  Once \(r\) is fixed, the second equation uniquely determines
\[
  a=\frac{\tau}{r(1-r)}
  \qquad\text{and then}\qquad
  b=ar.
\]
Consequently each of the at most two square roots of the slope gives at
most one ordered parameter pair \((a,b)\).  Hence \(\ell\) has at most
two representations of the required form.
\end{proof}

Let \(\cL_A\) be the set of distinct nonidentity lines occurring among
\(\ell_{a,b}\) with \(a,b\in A\).  From \eqref{eq:M-incidence} and
\cref{lem:multiplicity},
\begin{equation}\label{eq:second-moment-incidence}
  \sum_{a,b\in A}M(a,b)
  \leq m^2+2\I(A\times A,\cL_A).
\end{equation}
Indeed, the \(m\) diagonal pairs \(a=b\) contribute at most \(m\) each,
and every nonidentity line occurs with multiplicity at most two.

\section{Proof of the main theorem}

\begin{proof}[Proof of \cref{thm:main}]
The prime \(p=2\) is harmless, so assume that \(p\) is odd.  Let
\(A=A_p\) and \(m=|A|\).

Let \(c_0\) be the constant from \cref{thm:sdz}.  If
\[
  m^3>c_0p^2,
\]
then \(m\gg p^{2/3}\), which is stronger than the desired conclusion.
We may therefore suppose that
\begin{equation}\label{eq:char-condition}
  m^3\leq c_0p^2.
\end{equation}

The set \(\cL_A\) contains at most \(m^2\) lines.  Enlarge it, if
necessary, to a set \(\widetilde{\cL}_A\) of exactly \(m^2\) distinct
affine lines.  This is possible because \(m\leq p-1\), while there are
\(p^2\) nonvertical affine lines over \(\F_p\).  Apply \cref{thm:sdz} with
\[
  X=Y=A,\qquad x=y=m,\qquad N=m^2.
\]
The condition \(xy^2\leq N^3\) is \(m^3\leq m^6\), and the
characteristic condition is exactly \eqref{eq:char-condition}.  Hence
\begin{align}
  \I(A\times A,\cL_A)
  &\leq \I(A\times A,\widetilde{\cL}_A)\notag\\
  &\ll m^{3/4}m^{1/2}(m^2)^{3/4}+m^2\notag\\
  &\ll m^{11/4}.
  \label{eq:incidence-final}
\end{align}
Combining \eqref{eq:cauchy}, \eqref{eq:second-moment-incidence} and
\eqref{eq:incidence-final}, we obtain
\[
  \cN(A)^2
  \ll m\bigl(m^2+m^{11/4}\bigr)
  \ll m^{15/4}.
\]
On the other hand, \cref{lem:many-transitions} gives
\(\cN(A)\geq p-2\).  Therefore
\[
  (p-2)^2\ll m^{15/4},
\]
and consequently
\[
  m\gg p^{8/15}.
\]
After adjusting the absolute constant, this also covers the finitely
many small primes.
\end{proof}

\begin{remark}
The exponent \(8/15\) comes directly from the Cartesian-product incidence
exponent \(11/4\): the transition count is \(\gg p\), its square is
bounded by
\[
  m\cdot m^{11/4}=m^{15/4},
\]
and hence \(m\gg p^{2/(15/4)}=p^{8/15}\).  Any improvement for the
special line family \eqref{eq:line} would immediately strengthen
\cref{thm:main}.
\end{remark}

\section*{Statement on the use of AI}

GPT-5.6 Pro was used during exploratory work to help develop the
fractional-linear family
\[
  T_a(x)=a+\frac{a^2}{x},
\]
the Cauchy--Schwarz linearisation into the affine lines
\eqref{eq:line}, the line-multiplicity calculation, and the derivation
of the exponent \(8/15\).  It also assisted with algebraic checks,
literature searches, exposition and LaTeX preparation.

\end{document}